\newtheorem{lemma}{Lemma}
\newtheorem{theorem}{Theorem}
\newtheorem{problem}{Problem}
\renewcommand*\env@matrix[1][*\c@MaxMatrixCols c]{%
	\hskip -\arraycolsep
	\let\@ifnextchar\new@ifnextchar
	\array{#1}}
\tikzstyle{block} = [draw, fill=blue!20, rectangle, 
\tikzstyle{block2} = [draw, fill=black!15, rectangle, 
\tikzstyle{faultyblock} = [draw, fill=red!20, rectangle, 
\tikzstyle{reconblock} = [draw, fill=green!20, rectangle, 
\tikzstyle{sum} = [draw, fill=black!15, circle, minimum width=0.5cm]
\tikzstyle{input} = [coordinate]
\tikzstyle{output} = [coordinate]
\tikzstyle{pinstyle} = [pin edge={to-,thin,black}]
\title{\LARGE \bf
Unit-Vector Control Design under Saturating Actuators*
}
\author{Andevaldo da Encarnação Vitório$^1$, Pedro Henrique Silva Coutinho$^2$, Iury Bessa$^{3}$, \\ Victor Hugo Pereira Rodrigues$^2$, Tiago Roux Oliveira$^2$
\thanks{*This work was supported by the Brazilian agencies CNPq (Grant numbers: 407885/2023-4 and 309008/2022-0), CAPES, FAPEAM, and FAPERJ.} 
\thanks{$^{1}$ Andevaldo Vitório is with the Graduate Program in Electrical Engineering, Federal University of Amazonas, Manaus, Amazonas, Brazil.
			        {\tt\small andevaldo.vitorio@ufam.edu.br} }   
\thanks{$^{2}$ Pedro H. S. Coutinho, Victor H. P. Rodrigues, and Tiago Roux Oliveira are with the Department of Electronics and Telecommunication
Engineering, State University of Rio de Janeiro, Brazil.
			        {\tt\small phcoutinho@eng.uerj.br, victor.rodrigues@uerj.br, tiagoroux@uerj.br}} 
\thanks{$^{3}$ Iury Bessa is with the Department of Electricity, Federal University of Amazonas, Manaus, Amazonas, Brazil.
			        {\tt\small iurybessa@ufam.edu.br} }
}
\begin{document}

\maketitle
\thispagestyle{empty}
\pagestyle{empty}

\begin{abstract}
This paper deals with unit vector control design for multivariable polytopic uncertain systems under saturating actuators. For that purpose, we propose LMI-based conditions to design the unit vector control gain such that the origin of the closed-loop system is finite-time stable. Moreover, an optimization problem is provided to obtain an enlarged estimate of the region of attraction of the equilibrium point for the closed-loop system, where the convergence of trajectories is ensured even in the presence of saturation functions. Numerical simulations illustrate the effectiveness of the proposed approach.
\end{abstract}

\section{Introduction}
\label{sec:intro}

Sliding mode control is recognized by its inherent robustness properties, strong convergence properties, and the simplicity of its algorithmic implementation. In particular, the robustness of sliding mode controllers has been explored to deal with different phenomena, such as time-delays~\cite{chen2024sliding}, model uncertainties~\cite{Choi2007}, and the presence of additive disturbances~\cite{Cui2019} or faults~\cite{Wang2023}.
However, the literature still lacks systematic design methodologies, like those based on convex optimization that are already traditional for robust and nonlinear control of polytopic differential inclusions. 

In this regard, several recent studies have investigated the use of particular Lyapunov function structures to enable the design of sliding mode controllers based on semidefinite programming with linear matrix inequalities (LMIs) constraints. For example, LMI-based conditions have been successfully applied to design sliding-mode controllers for systems with disturbances in~\cite{Roy2020}. In~\cite{chen2024sliding}, LMI-based stabilization conditions are presented for impulsive sliding-mode control of systems subject to time delays and input disturbances. Recently,~\cite{geromel2024lmi,geromel2024multivariable} investigated the design of super-twisting control algorithms for polytopic uncertain multivariable systems. However, none of the aforementioned papers explicitly deal with the presence of saturating actuators.

The presence of saturating actuators is also a relevant issue that is frequently ignored. However, it is well known that the saturation might deteriorate the closed-loop performance, and lead to the loss of stability guarantees since the saturation modifies the domain of attraction. Therefore, sliding-mode controllers have also been applied to systems with saturating inputs~\cite{Corradini2007,Han2020}. However, there are still a few LMI-based design conditions for sliding-mode controllers for systems with saturating actuators. For linear time-invariant models, a unit vector control (UVC) approach designed based on LMIs is proposed for dealing with saturated inputs in~\cite{Zaafouri2017}. However, that research is not able to deal with uncertain systems. Moreover, the region of admissible initial conditions associated with a guaranteed finite time is not provided.

In this regard, this paper investigates the problem of UVC for polytopic uncertain systems with saturating inputs. Although the UVC can be viewed
as a saturated-by-construction control law, the control signal bound will depend on the control gain employed. Thus, two main aspects should be addressed for the proper UVC design under saturating actuator: \textit{(i)} How to design the control gain such that the convergence is ensured even in the presence of saturation? \textit{(ii)} What is the set of initial conditions for which the convergence is ensured (in the presence of saturation) for a guaranteed reaching time? In this paper, we provide novel finite-time stabilization conditions based on LMI constraints that allow to design UVCs that solve the above-mentioned problems. Moreover, we propose an optimization problem to estimate the region of attraction for which the convergence is guaranteed for the given reaching time bound, even in the presence of saturation.  In short, this paper presents the following original contributions: 
\begin{itemize}
    \item Novel LMI-based design conditions for polytopic uncertain systems under saturating actuators;
    \item The proposed design conditions guarantee the finite-time stabilization for those systems;
    \item We provide an optimization problem that allows for obtaining an enlarged estimate of the domain of attraction for the origin of those systems associated with a given finite time bound.
\end{itemize}

\textbf{Outline.} This paper is organized as follows. Section~\ref{sec:preliminaries} formulates the problem of unit vector control of polytopic uncertain systems under actuator saturation. Section~\ref{sec:main} presents the main results, including the LMI-based design conditions. Section~\ref{sec:sim} presents numerical simulations that demonstrate the efficacy of the proposed approach. Finally, Section~\ref{sec:conclusion} draws the main conclusions.

\textbf{Notation.}
$\mathbb{R}^{n}$ and $\mathbb{R}_{\geq 0}$ denote respectively the Euclidean space of $n$-dimensional real numbers and non-negative real numbers. The set of positive integers is $\mathbb{N}$, and the set $\mathbb{N}_{\leq p} = \{1,\ldots,p\}$, for some $p>1$. For a matrix $X$, $X \succ (\prec)~0$ means that $X$ is a positive (negative) definite matrix; $X^\top$ is its transpose; the identity matrix of dimension $n$ is denoted by $I_n$ and the null matrix of order $n\times m$ by $0_{n \times m}$; $\mathrm{diag}\{d_{1},\dots,d_{n}\}$ is a diagonal matrix with the elements/blocks $d_{1},\dots,d_{n}$ in the main diagonal. For a vector $v \in \mathbb{R}^{n}$, ${v}_{(\ell)}$ denotes its $\ell$-th entry.

\section{Problem Formulation}
\label{sec:preliminaries}

Consider the UVC system shown in Fig.~\ref{fig:diagram}.
\begin{figure}[!ht]
    \centering
    \includegraphics[width=\linewidth]{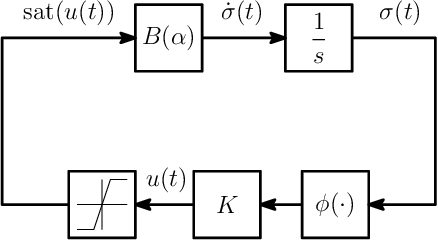}
    \caption{Block diagram of the UVC system.}
    \label{fig:diagram}
\end{figure}

Based on Fig.~\ref{fig:diagram}, we have that the plant is given by the following class of uncertain systems
\begin{align}\label{eq:plant}
  \dot{\sigma}  = B\mathrm{sat}(u),
\end{align}
where $\sigma \in \mathbb{R}^n$ is the state vector defined as $\sigma = (\sigma_1, \ldots, \sigma_n)$,
$u \in \mathbb{R}^m$ is the input vector.
Moreover, $B$ is constant but unknown, taking values in the set $\mathcal{B} = \mathrm{co}\{B_i\}_{i \in \mathbb{N}_{\leq N}}$.
This implies that it is possible to write
\begin{align}
  B = \sum_{i = 1}^N \alpha_i B_i,
\end{align}
where $\alpha = (\alpha_1,\ldots,\alpha_N)$
is the vector of uncertain parameters lying in the unit simplex
\begin{align}
  \Lambda = \left\lbrace\alpha \in \mathbb{R}^N: \sum_{i=1}^N \alpha_i = 1, \; \alpha_i \geq 0\right\rbrace.
\end{align}

Consider now the following unit vector control law:
\begin{align}\label{eq:control-uvc}
  u = K \phi(\sigma) =  K \frac{\sigma}{\|\sigma\|}
\end{align}
and let $\psi(u) = u - \mathrm{sat}(u)$ be the dead-zone nonlinearity.
Under this definition, the following closed-loop system can be obtained by substituting~\eqref{eq:control-uvc} in~\eqref{eq:plant}:
\begin{align}\label{eq:closed-loop-uvc}
  \dot{\sigma} = BK \frac{\sigma}{\|\sigma\|} - B\psi(u).
\end{align}
Thus, it is clear that the conditions for the design of the control gain change when we consider $u=K \sigma/||\sigma||$ or $u=\mathrm{sat}(K \sigma/||\sigma||)$.

Let $z = r(\sigma) \sigma$, where $r(\sigma) = 1/\sqrt{\|\sigma\|}$. In $z$-coordinates, the closed-loop system~\eqref{eq:closed-loop-uvc}
can be equivalently rewritten as
\begin{align}\label{eq:closed-loop}
  \dot{z} & = -\frac{1}{2}r(\sigma) \Pi_\sigma BK r(\sigma) z \\ & + r(\sigma) BK r(\sigma) z - r(\sigma) B \psi(u),
\end{align}
where $\Pi_\sigma = \sigma \sigma^\top/\|\sigma\|^2$ is a projection matrix which satisfies the
following properties: $\Pi_\sigma = \Pi_\sigma^\top$, $\Pi_\sigma^2 = \Pi_\sigma$, and $\|\Pi_\sigma\| = 1$, $\forall \sigma \in \mathbb{R}^n$~\cite{geromel2024multivariable}.


The problem addressed in this section is stated as follows.
\begin{problem}\label{problem:2}
Consider the uncertain system~\eqref{eq:plant} and the UVC law~\eqref{eq:control-uvc}.
Design a robust control gain $K$ such that the origin of the closed-loop system~\eqref{eq:closed-loop} is finite-time stable in the presence of saturating actuators.
\end{problem}

\section{Main Results}
\label{sec:main}

\subsection{Preliminaries Results}

\begin{lemma} [Adapted from~\cite{Tarbouriech2011}] \label{lemma:deadzone}
  Let the control input be defined as $u=Kz$ for all $z \in \mathbb{R}^{n}$ and a given $K \in \mathbb{R}^{m \times n}$, and the set $\mathcal{D}_{u}$ be  defined as
  \begin{equation}
    \label{eq:Du}
    \mathcal{D}_{u} = \left\lbrace z \in \mathbb{R}^{n} :
    |\left( K_{(\ell)} - L_{(\ell)} \right) z | \leq \bar u_{\ell}, \;
    \ell \in \mathbb{N}_{\leq m}\right\rbrace,
  \end{equation}
  \noindent for any matrix $L \in \mathbb{R}^{m \times n}$.
  If $z \in \mathcal{D}_{u}$, then
  \begin{equation}
    \label{eq:nl_sector}
    \psi(u)^{\top} U \left( \psi(u) - Lz \right) \leq 0,
  \end{equation}
  \noindent holds for any diagonal $U > 0$.
\end{lemma}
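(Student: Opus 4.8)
The plan is to exploit the diagonal structure of $U$ to split the claim into one scalar inequality per input channel, and then to settle each scalar inequality by a case analysis on the saturation level. Write $U$ as a diagonal matrix with positive diagonal entries $U_1,\dots,U_m$. Then the left-hand side of~\eqref{eq:nl_sector} expands as $\sum_{\ell=1}^m U_\ell\,\psi(u)_{(\ell)}\bigl(\psi(u)_{(\ell)} - (Lz)_{(\ell)}\bigr)$, so, since a sum of nonpositive terms weighted by the positive scalars $U_\ell$ is again nonpositive, it suffices to show $\psi(u)_{(\ell)}\bigl(\psi(u)_{(\ell)} - (Lz)_{(\ell)}\bigr) \leq 0$ for each $\ell \in \mathbb{N}_{\leq m}$.

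Next, fixing $\ell$ and recalling that $u_{(\ell)} = K_{(\ell)} z$ and that the saturation acts componentwise with symmetric level $\bar u_\ell$, I would distinguish three regimes. If $|u_{(\ell)}| \leq \bar u_\ell$, then $\mathrm{sat}(u)_{(\ell)} = u_{(\ell)}$, so $\psi(u)_{(\ell)} = 0$ and the term vanishes. If $u_{(\ell)} > \bar u_\ell$, then $\psi(u)_{(\ell)} = u_{(\ell)} - \bar u_\ell > 0$, and nonpositivity of the product reduces to $\psi(u)_{(\ell)} - (Lz)_{(\ell)} \leq 0$, i.e.\ $(K_{(\ell)} - L_{(\ell)})z \leq \bar u_\ell$, which is one of the two inequalities defining membership in $\mathcal{D}_u$ in~\eqref{eq:Du}. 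Symmetrically, if $u_{(\ell)} < -\bar u_\ell$, then $\psi(u)_{(\ell)} = u_{(\ell)} + \bar u_\ell < 0$, and nonpositivity of the product requires $\psi(u)_{(\ell)} - (Lz)_{(\ell)} \geq 0$, i.e.\ $-\bar u_\ell \leq (K_{(\ell)} - L_{(\ell)})z$, the other half of the same bound. This exhausts all cases and proves the lemma.

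I do not expect a genuine analytical obstacle here: the statement is the standard generalized sector condition for the deadzone nonlinearity, specialized to the present notation, and essentially reproduces the argument of~\cite{Tarbouriech2011}. The only points requiring care are bookkeeping ones — matching the sign of $\psi(u)_{(\ell)}$ in each regime to the correct one of the two inequalities hidden inside the absolute value that defines $\mathcal{D}_u$, and observing that the channel-by-channel reduction is legitimate precisely because $U$ is diagonal, since a full $U$ would couple the channels and break the decomposition.
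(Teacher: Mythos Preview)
Your proposal is correct and follows essentially the same approach as the paper's own proof: a three-case analysis on the saturation regime of each component, matching the sign of $\psi(u)_{(\ell)}$ to the appropriate half of the inequality $|(K_{(\ell)}-L_{(\ell)})z|\leq \bar u_\ell$. If anything, you are slightly more explicit than the paper in justifying why the diagonal structure of $U$ allows the reduction to componentwise inequalities, a step the paper leaves implicit.
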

\begin{proof}
     If $z \in \mathcal{D}_{u}$, the following expression is derived:
\begin{align} \label{eq:saturation_interval}
    -\overline{u}_{(\ell)} \leq ( K - L)_{(\ell)} z \leq \overline{u}_{(\ell)}
\end{align}

To show that \eqref{eq:nl_sector} holds, three cases will be analyzed:
\begin{itemize}[leftmargin=*,nosep]
    \item \textbf{Case 1}: Control signal exceeds the maximum allowed limit, $u_{(\ell)} > \overline{u}_{(\ell)}$. In this case,
    \begin{align*}
        \psi(u_{(\ell)}) = u_{(\ell)} - \overline{u}_{(\ell)} = (K z)_{(\ell)} - \overline{u}_{(\ell)} > 0
    \end{align*}
    Thus, from \eqref{eq:Du}, it follows that
    \begin{align*}
        (K z)_{(\ell)} - \overline{u}_{(\ell)} - (L z)_{(\ell)} \leq 0.
    \end{align*}

    \item \textbf{Case 2}: Control signal is within the allowed operating region, $-\overline{u}_{(\ell)} \leq u_{(\ell)} \leq \overline{u}_{(\ell)}$. Here, $\psi(u_{(\ell)}) = 0$, since $\mathrm{sat}(u_{(\ell)}) = u_{(\ell)}$.

    \item \textbf{Case 3}: Control signal is below the minimum allowed limit, $u_{(\ell)} < -\overline{u}_{(\ell)}$. Then,
    \begin{align*}
        \psi(u_{(\ell)}) = u_{(\ell)} + \overline{u}_{(\ell)} = (K z)_{(\ell)} + \overline{u}_{(\ell)} < 0
    \end{align*}
    Thus, from \eqref{eq:saturation_interval}, it follows that
    \begin{align*}
        \overline{u}_{(\ell)} + (K z)_{(\ell)} - (L z)_{(\ell)} \geq 0.
    \end{align*}
\end{itemize}

From these three cases, it can be verified that the inequality in \eqref{eq:nl_sector} holds for all $z$ in \eqref{eq:Du}. This concludes the proof.
\end{proof}

The region $\mathcal{D}_u$ is defined as a polyhedral set in~\eqref{eq:Du} in the $z$-coordinates.
However, in the $\sigma$-coordinates, we can obtain an equivalent representation for this set as follows:
\begin{align}
     \label{eq:Du2}
    \mathcal{D}_{u} = \left\lbrace \sigma \in \mathbb{R}^{n} :
    |\left( K_{(\ell)} - L_{(\ell)} \right){\sigma}/{\|\sigma\|} | \leq \bar u_{\ell}, \;
    \ell \in \mathbb{N}_{\leq m}\right\rbrace.
\end{align}
Thus, in $\sigma$-coordinates, the half-planes defining the polyhedral set can be deformed, producing
surfaces in the $\sigma$-space.

\subsection{Control design condition}

The design condition for the UVC is derived in this section using the following Lyapunov function candidate:
\begin{align}\label{eq:Lyap-uvc-1}
  V(\sigma) = \frac{1}{\|\sigma\|} \sigma^\top P \sigma,
\end{align}
where $P = P^\top > 0$. This function can be rewritten as the following standard quadratic function using the $z$-coordinates:
\begin{align}\label{eq:Lyap-uvc-2}
  V(z) = z^\top P z.
\end{align}

\begin{theorem}\label{thm:2}
  Consider the uncertain system~\eqref{eq:plant} controlled by~\eqref{eq:control-uvc}. Given a scalar \(\mu > 0\), if there exist a symmetric matrix \( X \in \mathbb{R}^{n \times n} \), a diagonal symmetric matrix \( S \in \mathbb{R}^{m \times m} \), and full matrices \( Z, Y \in \mathbb{R}^{m \times n} \) such that the following conditions hold:
  \begin{align}
     & X > 0, \quad S > 0, \quad \Lambda_i < 0, \quad \forall i \in \mathbb{N}_{\leq{N}}, \label{eq:thm-1-2} \\
     & \begin{bmatrix}
         X                       & Z_{(\ell)}^{\top} - Y_{(\ell)}^{\top} \\
         Z_{(\ell)} - Y_{(\ell)} & \bar{u}_{(\ell)}^{2}
       \end{bmatrix} \geq 0, \quad \forall \ell \in \mathbb{N}_{\leq m}, \label{eq:thm-2-2}                  \\
  \end{align}
  where
  \begin{align}
     & \Lambda_i = \begin{bmatrix}
                     \Lambda_{i, 11} & Y^{\top} - B S & Z^\top B_i^\top \\
                     Y - S B^{\top}  & - 2S           & 0               \\
                     B_i Z           & 0              & -\mu I
                   \end{bmatrix},         \\
     & \Lambda_{i, 11} = B_i Z + Z^\top B_i^\top + \frac{\mu}{4} I + \tilde{Q}.
  \end{align}
  Then, the origin of the closed-loop system~\eqref{eq:closed-loop-uvc} is finite-time stable, where \(K = Z X^{-1}\) and \(L = Y X^{-1}\).  Moreover, the set
  \begin{equation}
    \label{eq:Dx}
    \Omega = \left\lbrace z \in \mathbb{R}^{n} : z^{\top} P z \leq 1 \right\rbrace,
  \end{equation}
  where $P = X^{-1}$, provides an estimate of the domain of attraction of the equilibrium of~\eqref{eq:closed-loop}, and it satisfies \(\Omega \subset \mathcal{D}_u\).
\end{theorem}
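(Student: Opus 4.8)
The plan is to use the quadratic Lyapunov function $V(z)=z^{\top}Pz$ of~\eqref{eq:Lyap-uvc-2} with $P=X^{-1}$, and to establish, in order, the set inclusion $\Omega\subseteq\mathcal D_u$, a strict and uniform decrease of $V$ along~\eqref{eq:closed-loop} on $\Omega\setminus\{0\}$, and — from the decrease — forward invariance of $\Omega$ together with finite-time convergence. The inclusion $\Omega\subseteq\mathcal D_u$ comes first: the implication $z^{\top}Pz\le 1\Rightarrow|(K_{(\ell)}-L_{(\ell)})z|\le\bar u_{(\ell)}$ is equivalent, by a Schur complement, to positive semidefiniteness of $\left[\begin{smallmatrix}P & (K-L)_{(\ell)}^{\top}\\ (K-L)_{(\ell)} & \bar u_{(\ell)}^{2}\end{smallmatrix}\right]$, and a congruence by $\mathrm{diag}(X,1)$ with the substitutions $KX=Z$, $LX=Y$ turns this into exactly~\eqref{eq:thm-2-2}. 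Consequently the dead-zone sector inequality~\eqref{eq:nl_sector} of Lemma~\ref{lemma:deadzone} — applied along the unit direction $\sigma/\|\sigma\|=z/\|z\|$, consistently with~\eqref{eq:Du2} — is available for every $z\in\Omega$.

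Next I would differentiate $V$ along~\eqref{eq:closed-loop}. Using $r(\sigma)=1/\|z\|$, this yields $\dot V = 2r(\sigma)^{2}z^{\top}PBKz - r(\sigma)^{2}z^{\top}P\Pi_{\sigma}BKz - 2r(\sigma)z^{\top}PB\psi(u)$: a main term, a projection cross-term, and a dead-zone term. The projection cross-term is the crux: $\Pi_{\sigma}$ is rank one and this quantity is not sign-definite, so it cannot be absorbed directly. I would dominate it by a completion of squares, $-z^{\top}P\Pi_{\sigma}BKz\le\tfrac{\mu}{4}z^{\top}P^{2}z+\tfrac1{\mu}z^{\top}K^{\top}B^{\top}BKz$, using $\Pi_{\sigma}^{2}=\Pi_{\sigma}\preceq I$; this is precisely what brings in the free scalar $\mu>0$ and the blocks $\tfrac{\mu}{4}I$ (in $\Lambda_{i,11}$) and $-\mu I$ (in the $(3,3)$ block). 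To put the dead-zone contribution on the same homogeneity footing, I would rescale $\tilde\psi:=\|z\|\psi(u)$, so that $-2r(\sigma)z^{\top}PB\psi(u)=-2r(\sigma)^{2}z^{\top}PB\tilde\psi$ and the sector inequality reads $\tilde\psi^{\top}U(\tilde\psi-Lz)\le0$; adding the nonnegative slack $-2r(\sigma)^{2}\tilde\psi^{\top}U(\tilde\psi-Lz)$ (S-procedure, diagonal $U\succ0$), together with a matrix-convexity step bounding $(BK)^{\top}(BK)$ by $\sum_i\alpha_i(B_iK)^{\top}(B_iK)$, collects everything into $\dot V\le r(\sigma)^{2}\,\xi^{\top}\big(\textstyle\sum_i\alpha_i\mathcal M_i\big)\xi$ with $\xi=(z,\tilde\psi)$.

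Then a congruence of $\mathcal M_i$ by $\mathrm{diag}(X,S)$ with $S=U^{-1}$, followed by a Schur complement on the $\tfrac1\mu K^{\top}B_i^{\top}B_iK$ block, reproduces exactly the LMI $\Lambda_i<0$ of~\eqref{eq:thm-1-2}, with $K=ZX^{-1}$, $L=YX^{-1}$, the $\tfrac\mu4 I$ surviving from the completion of squares, and $\tilde Q$ playing the role of the prescribed decay term (equivalently, $\dot V$ is bounded above by $-r(\sigma)^{2}$ times a positive-definite quadratic form in $z$, which fixes the guaranteed reaching time). By convexity $\sum_i\alpha_i\mathcal M_i\prec0$ for all $\alpha\in\Lambda$, so $\dot V<0$ on $\Omega\setminus\{0\}$; and since $r(\sigma)^{2}\|z\|^{2}=1$ for $z\ne0$ and the simplex is compact, this sharpens to a \emph{uniform} bound $\dot V\le-c<0$. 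Hence $\Omega$ is forward invariant (so the sector condition never fails along a trajectory issuing from $\Omega$), $V$ strictly decreases, and integrating $\dot V\le-c$ gives $V(z(t))\le V(z(0))-ct$; every trajectory starting in $\Omega$ therefore reaches the origin in finite time bounded by $V(z(0))/c\le 1/c$. This yields finite-time stability of the origin of~\eqref{eq:closed-loop-uvc}/\eqref{eq:closed-loop} with $\Omega\subset\mathcal D_u$ as a guaranteed estimate of the region of attraction.

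The main obstacle, as indicated, is the rank-one projection cross-term $z^{\top}P\Pi_{\sigma}BKz$ in $\dot V$: bounding it by a genuine quadratic form — so that, after the congruence, the whole estimate becomes an LMI in $(X,S,Z,Y)$ — is what forces the auxiliary scalar $\mu$ and the enlarged $3\times3$ pattern of $\Lambda_i$. A secondary point deserving care is the passage from a pointwise-negative $\dot V$ to a uniform one, i.e.\ obtaining \emph{finite-time} rather than merely asymptotic convergence; here the identity $r(\sigma)^{2}\|z\|^{2}=1$ (equivalently, the degree-zero homogeneity of $\dot V$ along the closed loop) does the job.
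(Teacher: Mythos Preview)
Your proposal is correct and follows essentially the same route as the paper: the same Lyapunov function~\eqref{eq:Lyap-uvc-2}, the same completion-of-squares bound on the $\Pi_\sigma$ cross-term (which is exactly~\eqref{eq:proof-thm2-5}), the same sector/S-procedure treatment of $\psi(u)$, the same congruence by $\mathrm{diag}(X,S)$ and Schur complement to recover $\Lambda_i$, and the same finite-time argument via $r(\sigma)^2\|z\|^2=1$. The only cosmetic differences are that you run the derivation in the analysis-to-LMI direction (the paper goes LMI-to-analysis), you rescale $\tilde\psi=\|z\|\psi(u)$ whereas the paper multiplies directly by $[\,z^\top r(\sigma)\;\;\psi^\top\,]$, and you invoke a Jensen bound on $(BK)^\top(BK)$ where the paper instead takes the convex combination of the $\Lambda_i$ \emph{before} the Schur complement so that $B=\sum_i\alpha_iB_i$ appears linearly and the quadratic term $K^\top B^\top BK$ is produced directly; both orders yield the same LMI.
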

\begin{proof}
  Consider the quadratic Lyapunov candidate function defined in \eqref{eq:Lyap-uvc-2}. By multiplying both sides by \(\operatorname{diag}(X^{-1}, I)\) and applying Schur’s complement lemma, it follows from \eqref{eq:thm-2-2} that
  \begin{align}
     & z^{\top} P z \geq \frac{|(Z_{(\ell)} - Y_{(\ell)}) z|^{2}}{\bar{u}^{2}_{(\ell)}}, \label{eq:inc_Du_ineq} \\
     & V(z) \geq \frac{|(K_{(\ell)} - L_{(\ell)}) z|^{2}}{\bar{u}_{(\ell)}^{2}}, \nonumber
  \end{align}
  for all \(\ell \in \mathbb{N}_{\leq m}\), where \(P = X^{-1}\).
  Since \(z \in \mathcal{D}_{z}\), it follows from \eqref{eq:inc_Du_ineq} that \(\mathcal{D}_{z} \subset \mathcal{D}_{u}\).


  Now, assume that conditions \eqref{eq:thm-1-2}--\eqref{eq:thm-2-2} hold. From \eqref{eq:thm-1-2}, it follows that both \(X\) and \(S\) are nonsingular, and since \(X, S > 0\), their inverses \(X^{-1}\) and \(S^{-1}\) exist. Multiplying the inequalities in \eqref{eq:thm-2-2} by \(\operatorname{diag}(X^{-1}, S^{-1}, I)\) on the left and its transpose on the right yields
  \begin{align}\label{eq:proof-thm2-1}
     & \begin{bmatrix}
         \tilde \Lambda_{i, 11} & L^{\top} U - PB & K^\top B_i^\top \\
         U L - B^{\top} P       & -2 U            & 0               \\
         B_i K                  & 0               & -\mu I
       \end{bmatrix} < 0,                        \\
     & \tilde \Lambda_{i, 11} = PB_iK + K^\top B_i^\top P + \frac{\mu}{4} P^2 + Q.
  \end{align}
  for all \(i \in \mathbb{N}_{\leq{N}}\), where \(U = S^{-1},\) \(K = ZX^{-1}\), and \(L = Y X^{-1}\).
  Since \(B \in \operatorname{co}\{B_i\}_{i=1}^N\), multiplying \eqref{eq:proof-thm2-1} by \(\alpha_i\), summing from \(1\) to \(N\), and applying Schur complement, we obtain
  \begin{gather}
    \begin{bmatrix}
      \varPi           & L^{\top} U - PB \\
      U L - B^{\top} P & -2 U
    \end{bmatrix} < 0, \label{eq:proof-thm2-2} \\
    \varPi =  \frac{1}{\mu} K^\top B^\top B K + \frac{\mu}{4} P^2 + P B K + K^\top B^\top P + Q.
  \end{gather}
  Multiplying \eqref{eq:proof-thm2-2} by \(\begin{bmatrix} z^{\top} r(\sigma) & \psi^{\top}(u)\end{bmatrix}\) on the left and its transpose on the right leads to
  \begin{multline} \label{eq:proof-thm2-3}
    z^{\top} r(\sigma) \varPi r(\sigma) z
    - 2 \psi^{\top} U (\psi(u) - L r(\sigma)z) \\
    - \psi^{\top}(u) B^{\top} P r(\sigma) z
    - z^{\top} r(\sigma) P B \psi(u)
    < 0.
  \end{multline}
  Using Lemma~\ref{lemma:deadzone} and assuming \eqref{eq:thm-2-2} holds, \eqref{eq:nl_sector} is valid, and \eqref{eq:proof-thm2-3} implies
  \begin{multline} \label{eq:proof-thm2-4}
    z^{\top} r(\sigma) \varPi r(\sigma) z
    - \psi^{\top}(u) B^{\top} P r(\sigma) z \\
    - z^{\top} r(\sigma) P B \psi(u)
    < 0.
  \end{multline}
  Provided that
  \begin{equation}    \label{eq:proof-thm2-5}
    - \frac{1}{2}K^\top B^\top \Pi_\sigma P - \frac{1}{2}P \Pi_\sigma BK \leq \frac{1}{\mu} K^\top B^\top B K + \frac{\mu}{4} P^2
  \end{equation}
  since
  \begin{align*}
    \left(\frac{1}{\sqrt{\mu}}BK + \frac{\sqrt{\mu}}{2}\Pi_\sigma  P \right)^\top \left(\frac{1}{\sqrt{\mu}}BK + \frac{\sqrt{\mu}}{2}\Pi_\sigma P \right) \geq 0
  \end{align*}
  and $\|\Pi_\sigma\|=1$, then, from \eqref{eq:proof-thm2-3}, it follows that
  \begin{multline} \label{eq:proof-thm2-6}
    -\frac{1}{2} z^{\top} r(\sigma) K^\top B^\top \Pi_\sigma P r(\sigma) z
    -\frac{1}{2} z^{\top} r(\sigma) P \Pi_\sigma B K r(\sigma) z \\
    + z^{\top} r(\sigma) P B K r(\sigma) z
    + z^{\top} r(\sigma) K^\top B^\top P r(\sigma) z \\
    + 2\eta z^{\top} r(\sigma) P r(\sigma) z
    - \psi^{\top}(u) B^{\top} P r(\sigma) z \\
    - z^{\top} r(\sigma) P B \psi(u) < 0
  \end{multline}
  The inequality \eqref{eq:proof-thm2-6} implies
  \begin{multline} \label{eq:proof-thm2-7}
    \dot{V}(z) = \dot z^{\top} P z + z^{\top} P \dot z < - z^\top r(\sigma)Qr(\sigma)z < 0,
  \end{multline}
  with $V(z)$ defined in~\eqref{eq:Lyap-uvc-2}.
  To show the finite-time convergence, notice that
  $z^\top r(\sigma) Q r(\sigma) z \geq \lambda_{\min}(Q) {\|z\|^2}/{\|\sigma\|} =  \lambda_{\min}(Q)$,
  hence, it is possible to obtain from \eqref{eq:proof-thm2-7} that the reaching time is upper-bounded by
  \begin{align}
    T_r \leq \frac{V_0}{\lambda_{\min}(Q)},
  \end{align}
  where $V_0 = V(\sigma(0)) = \sigma^\top(0) P \sigma(0) / \|\sigma(0)\|$, $\forall \sigma(0) \neq 0$.  This shows that the convergence occurs in finite time.
  This concludes the proof.
\end{proof}

\subsection{Determining the set of guaranteed reaching time}

In this section, we formulate a convex optimization problem for designing the UVC.
For a given upper-bound for the reaching time, we obtain the largest set of initial
conditions for which the finite-time convergence is ensured in the presence of saturation.

Notice that for a given initial condition $\sigma(0)$ (associated to $V_0$), 
the reaching time can be minimized by increasing the smallest eigenvalue of $Q$. 
This objective can be achieved by incorporating the following constraint:
\begin{align}\label{eq:maximize_Q}
\begin{bmatrix}
    \tilde{Q} & X \\
    X & \rho I
\end{bmatrix} \geq 0.
\end{align}
From~\eqref{eq:maximize_Q}, it follows from Schur complement that
$\tilde{Q} - \rho^{-1} X^2 \geq 0$. By multiplying both sides by $X^{-1}$,
we have that $Q \geq \rho^{-1} I$, since $Q = X^{-1} \tilde{Q} X^{-1}$.
Thus, by minimizing $\rho$, the eigenvalues of $Q$ are maximized, thus
reducing the reaching time $T_{r}$.

This objective can be achieved by~\eqref{eq:maximize_Q}. However, notice that $X = P^{-1}$ in Theorem~\ref{thm:2}.
To enlarge the estimated set of initial conditions, we consider the following constraint:
\begin{align}\label{eq:maximize_U0}
  \begin{bmatrix}
    \varphi I & I \\ I & X
  \end{bmatrix} \geq 0.
\end{align}
The condition in~\eqref{eq:maximize_U0} implies from Schur complement that $\varphi I \geq X^{-1}$.
Thus, $P \leq \varphi I$ or still $V(z) \leq \varphi z^\top z$.
Hence, it is possible to conclude that $\mathcal{B}\subset \Omega$,
where $\mathcal{B} = \{z \in \mathbb{R}^n : z^\top z \leq \varphi^{-1}\}$ and
\begin{align}\label{eq:Omega-set-2}
  \Omega = \{\sigma \in \mathbb{R}^n : V(\sigma) \leq 1\}.
\end{align}
Thus, if $\varphi$ is minimized, the set $\Omega$ is enlarged. Therefore, if $\sigma(0)$ is taken inside of $\Omega$, the reaching time is guaranteed bounded by
\begin{align}
  T_r \leq \frac{V_0}{\lambda_{\min}(Q)} \leq {{\rho}}.
\end{align}

The optimization problem for obtaining the largest set of initial conditions associated with a guaranteed
reaching time $\rho$ is stated in the sequel:
\begin{flalign}
   & \min\limits_{X, S, Z, Y} \qquad \varphi \label{eq:optmization_problem}                                                     \\
   & \text{subject to}~\text{and LMIs in}~\eqref{eq:thm-1-2}, \eqref{eq:thm-2-2}, \eqref{eq:maximize_Q}, \eqref{eq:maximize_U0}. \nonumber \\
\end{flalign}

\section{Numerical Results}
\label{sec:sim}

\subsection{Example~1: Planar kinematic manipulator}
We consider a planar kinematic manipulator with an end-effector
image position coordinates $\sigma = [p_x,p_y]^\top \in \mathbb{R}^2$
given by an uncalibrated fixed camera with an optical orthogonal axis with respect to the
robot workspace plane~\cite{oliveira2014overcoming}. The uncertain model proposed by~\cite{geromel2024lmi} is described as
\begin{align}
  B(\phi) =
  \begin{bmatrix}
    \cos{(\phi)}  & \sin{(\phi)} \\
    -\sin{(\phi)} & \cos{(\phi)}
  \end{bmatrix}.
\end{align}
The matrix $B$ that depends on the uncertain rotation angle $\phi$
due to the uncalibrated camera. Let $\bar{\phi}$ be a given nominal angle,
the uncertainty can be modeled as the variation $\Delta \phi = \phi - \bar{\phi}$,
such that $|\Delta \phi| \leq \bar{\Delta}$ and $B(\phi) = B(\Delta \phi) B(\bar{\phi})$. With this description, it is possible to obtain
$N=4$ vertices for the polytopic representation of the uncertain matrix $B(\phi)$ associated with the variations of the following parameters:
\begin{align}
  \begin{bmatrix}
    \cos{(\Delta \phi)} \\
    \sin{(\Delta \phi)}
  \end{bmatrix}
  \in
  \mathrm{co}\left\lbrace
  \begin{bmatrix}
    \cos{(\bar{\Delta})} \\
    \sin{(\bar{\Delta})}
  \end{bmatrix},
  \begin{bmatrix}
    1 \\
    \sin{(\bar{\Delta})}
  \end{bmatrix}, \right. \nonumber \\
  \left.
  \begin{bmatrix}
    \cos{(\bar{\Delta})} \\
    -\sin{(\bar{\Delta})}
  \end{bmatrix},
  \begin{bmatrix}
    1 \\
    -\sin{(\bar{\Delta})}
  \end{bmatrix}
  \right\rbrace,
\end{align}
valid for all $|\Delta \phi| \leq \bar{\Delta}$, provided that $0 \leq \bar{\Delta} \leq \pi/2~\mathrm{rad}$.
For the conducted experiments, we assume that $\bar{\phi} = \pi/6$ and $\bar{\Delta} = \pi/4$.
For the conducted experiments, we assume that the saturation levels are $\overline{u}_1 = \overline{u}_2 = 2$.

The optimization problem~\eqref{eq:optmization_problem} is solved considering $\mu = 3$ and the upper-bound to the reaching time $T_r$ is taken as $\rho = 1$. The resulting robust control gain is 
\begin{align}
    K = \begin{bmatrix}
        -1.9368  &  1.1182 \\
        -1.1182  & -1.9368
    \end{bmatrix}.   
\end{align}
Notice that this robust control gain ensures the stabilization of the saturated closed-loop system 
for any convex combination of the $4$ vertices of the polytopic domain associated with the uncertain matrix $B$.

Moreover, the largest region of initial conditions $\Omega$ for the guaranteed reaching time $\rho = 1$~s is illustrated in Fig.~\ref{fig:region} together with the set $\mathcal{D}_u$ in~\eqref{eq:Du2}. 
It is clearly noticed that the obtained region of initial conditions $\Omega$ is contained in the 
set $\mathcal{D}_u$. As theoretically proven, this ensures the finite-time convergence of
closed-loop trajectories initiating in $\Omega$, even in the presence of saturating actuators. 
Finally, this figure also depicts several closed-loop trajectories 
with initial conditions taken inside the region $\Omega$, which converge within the guaranteed
reaching time.
\begin{figure}[!ht]
    \centering
    \includegraphics[width=\columnwidth]{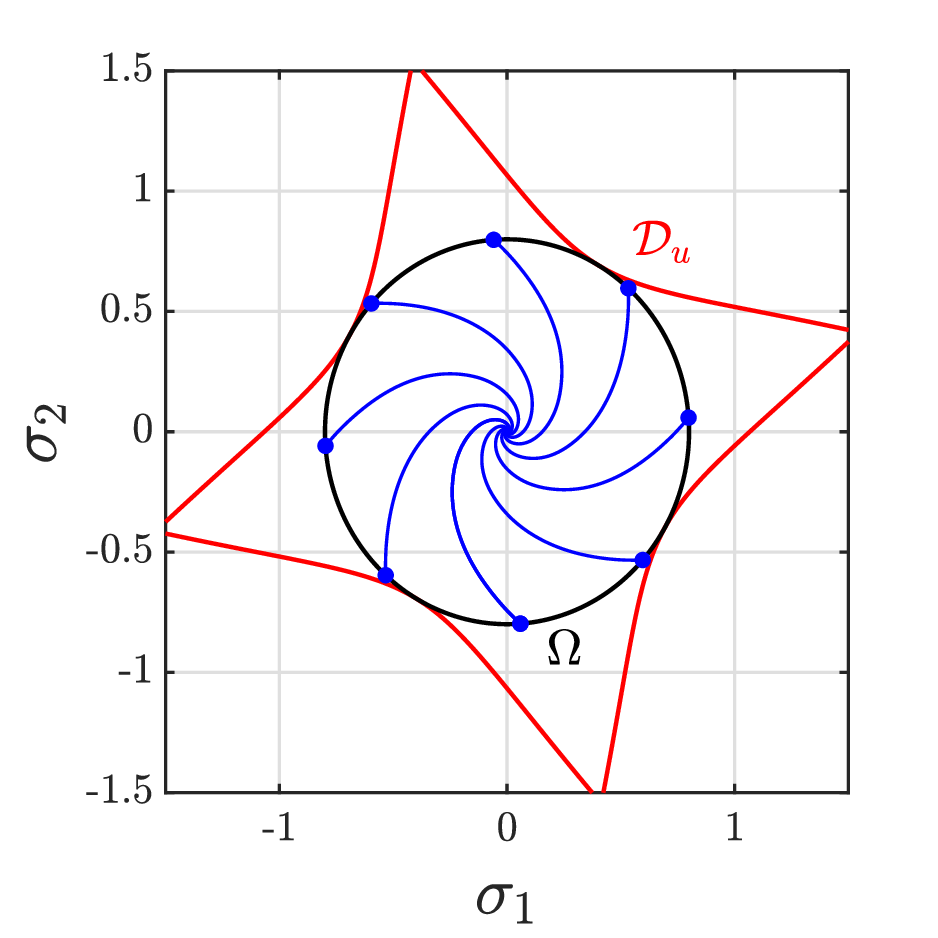}
    \caption{The set $\Omega$, defined in~\eqref{eq:Dx} for which the convergence occurs within the pre-specified reaching time $\rho$. The region $\Omega$ is contained in the region
    $\mathcal{D}_u$, given in~\eqref{eq:Du}.}
    \label{fig:region}
\end{figure}

The state trajectory with initial condition $\sigma(0) = [0.0587 \; -0.7976]^\top$, taken
at the border of $\Omega$, is depicted in Fig.~\ref{fig:state}(a). 
The saturated and unsaturated control input signals are depicted in Fig.~\ref{fig:state}(b).
The finite time convergence is ensured in a time smaller than $\rho = 1$~s, even though
the control input signals saturate. Even though the UVC can be viewed as a saturated-by-design
control law, one possible alternative to ensure the limitation of the control signal and avoid saturation 
is to constrain the norm of $K$. This is not done here since the control input signals
are allowed to saturate provided that the states are inside the region $\Omega \subset \mathcal{D}_u$. This fact is also evidenced by evaluating the norm of $K$, which is
$\|K\|_2 = 2.2364 > \overline{u} = 2$.
\begin{figure}[!ht]
    \centering
    \begin{subfigure}{\columnwidth}
    \centering
    \includegraphics[width=\linewidth]{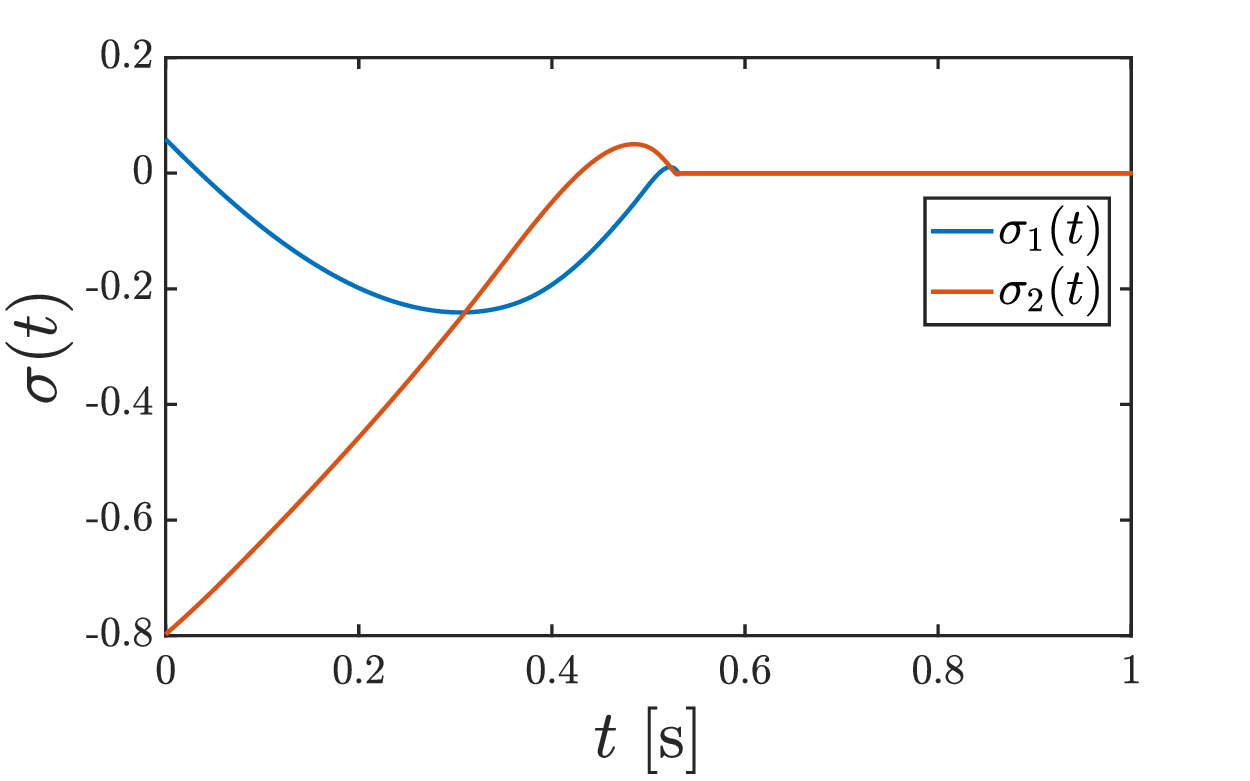}
    \caption{State $\sigma(t)$.}
    \end{subfigure}
    \begin{subfigure}{\columnwidth}
    \centering
    \includegraphics[width=\linewidth]{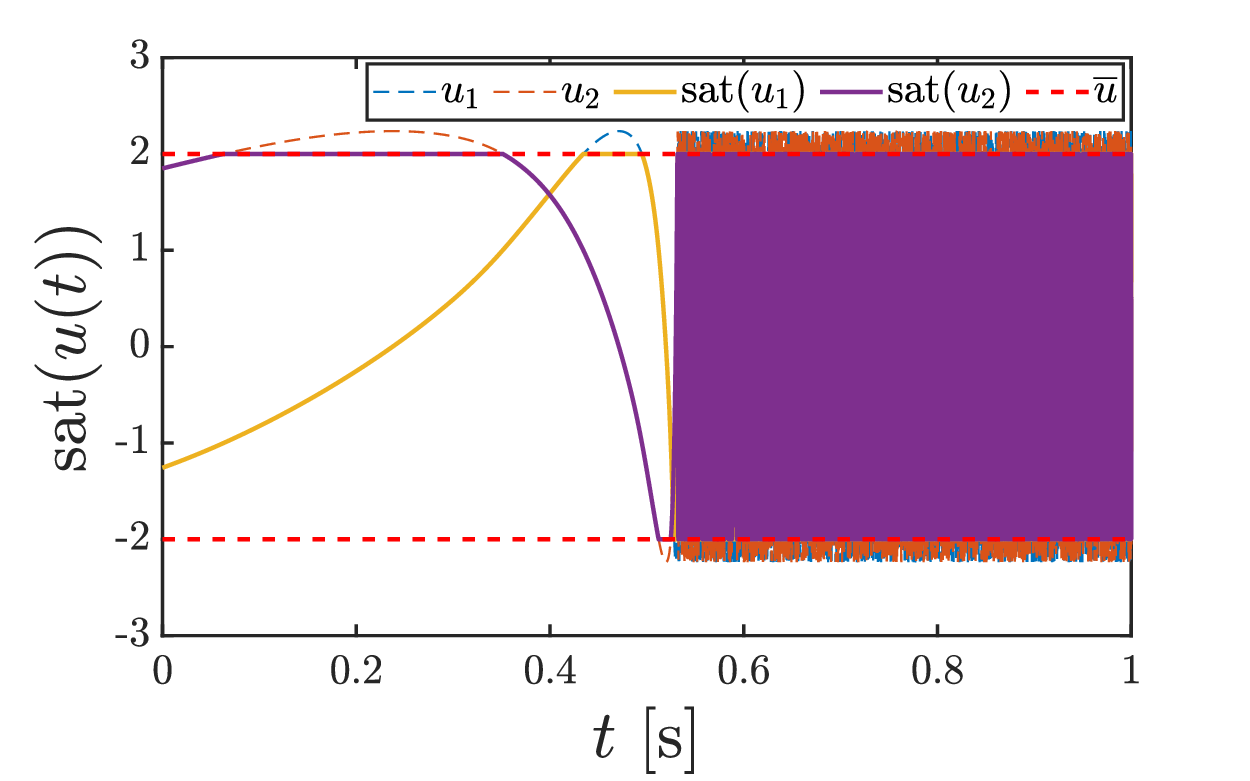}
    \caption{Control input $u(t)$ and the saturated input $\mathrm{sat}(u(t))$.}
    \end{subfigure}
    \caption{Closed-loop trajectory for the initial condition $\sigma(0) = [0.0587 \; -0.7976]^\top$ in $\Omega$.}
    \label{fig:state}
\end{figure}


\subsection{Example~2: Underwater ROV system}

The underwater remotely operated vehicle (ROV) dynamics is described with the state vector $\sigma=[v_x \; v_y \; \omega_z]^\top \in \mathbb{R}^3$, where $v_x$ and $v_y$ are velocities related to the body frame and $\omega_z$ is the angular velocity with respect to the $z$-axis. The system has four inputs resulting from propellers responsible for the displacement of the body. According to~\cite{geromel2024multivariable}, a simplified uncertain model of this system can be obtained by taking $B(g)=M^{-1}\Psi\Pi(g)$, with  $M=\mathrm{diag}(m_0, m_o, I_z)$, and
\begin{align}
    \Psi =
    \begin{bmatrix}
        \psi_1 & \psi_1 & \psi_1 &  \psi_1 \\
        \psi_1 & -\psi_1 & -\psi_1 & \psi_1\\
        -\psi_2 & \psi_2 & -\psi_2 & \psi_2
    \end{bmatrix},
\end{align}
where $m_0 = 290~\mathrm{kg}$ is the ROV mass, $I_z = 290~\mathrm{kg m}^2$ is the
moment of inertia, $\psi_1 = \sqrt{2}/2$, and $\psi_2 = 0.35~\mathrm{m}$. 
The input matrix with uncertain coefficients is $\Pi(g) = \mathrm{diag}(g_1,1,g_3,1)$,
where $g_1, g_3 \in [1/2,1]$ are uncertain gains in the actuator channels. 
Since the system has two uncertain parameters, it leads to $N=4$ vertices $B_i \in \mathbb{R}^{3 \times 4}$ in the polytopic description of the matrix $B$ in~\eqref{eq:plant}.
We assume the propellers are saturated by the limits $\overline{u} = 30$.

Then, the optimization problem in~\eqref{eq:optmization_problem} is solved considering
the guaranteed reaching time of $\rho = 10$~s, and $\mu = 0.4$. The parameter $\mu$ has been 
obtained via a scalar search such that a feasible solution was attained with an enlarged set $\Omega$.
The resulting robust control gain is:
\begin{align*}
    K = 
    \begin{bmatrix}
        -30.9190 &  -5.7321  &  5.9126 \\
      -20.2414  &  23.8253  &  -0.3787 \\
      -31.0926  &  0.9531  &  4.8242 \\
      -22.5835 &  -26.6822  & -14.9989
    \end{bmatrix}.
\end{align*}
Notice that the norm of this control gain is $\|K\|_2 = 53.7608 > \overline{u} = 30$, which does not avoid saturation. 

The resulting enlarged set of initial conditions $\Omega$ and the surfaces associated with the set $\mathcal{D}_u$ (given as in~\eqref{eq:Du2}) are depicted in Fig.~\ref{fig:region2}. Thus, trajectories
initiating in $\Omega$ will converge in finite time with a time smaller than or equal to $\rho = 10$~s. 
\begin{figure}[!ht]
    \centering
    \includegraphics[width=0.93\columnwidth]{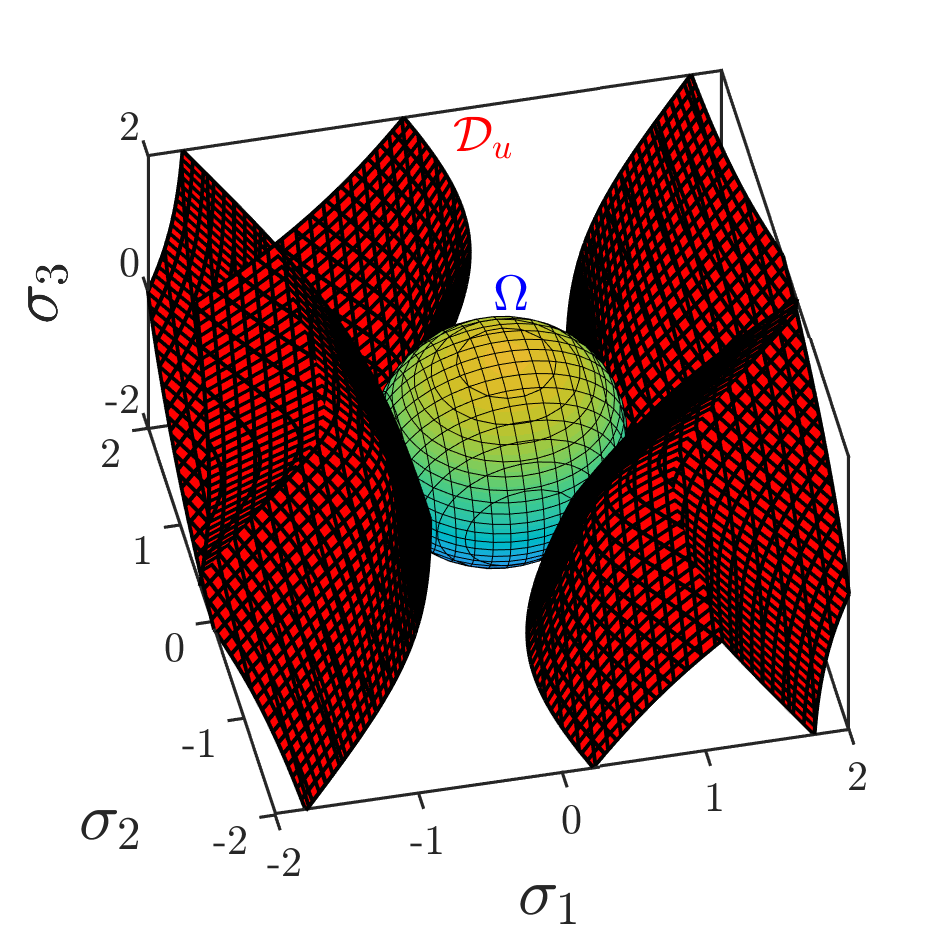}
    \caption{The set $\Omega$, defined in~\eqref{eq:Dx} for which the convergence occurs within the pre-specified reaching time $\rho$. The region $\Omega$ is contained in the region
    $\mathcal{D}_u$, given in~\eqref{eq:Du}.}
    \label{fig:region2}
\end{figure}

Consider the initial condition~$\sigma(0) = [0.60 \, 0.60 \,  0.4712]^\top \in \Omega$.
The states $\sigma(t)$ are depicted in Fig.~\ref{fig:state2}(a), where we can observe
the convergence within the specified reaching time. Moreover, the control input signals $u(t)$
and their saturated versions $\mathrm{sat}(u(t))$, which are effectively applied to the system,
are depicted in Fig.~\ref{fig:state2}(b). Notice that the control input signal $u_4$ remains saturated
during approximately $6.12$~s. However, even in the presence of saturation, the closed-loop trajectory
converges within the specified reaching time, as expected from the theoretical results in~Theorem~\ref{thm:2}. Finally, the time-series of the Lyapunov function $V(\sigma(t))$ in~\eqref{eq:Lyap-uvc-1} is also depicted in Fig.~\ref{fig:state2}(c). Notice that $V(\sigma(0)) \leq 1$, whcih illustrates
that the selected initial condition~$\sigma(0)$ is contained in the $\Omega$.
\begin{figure}[!ht]
    \centering
    \begin{subfigure}{\columnwidth}
    \centering
    \includegraphics[width=\linewidth]{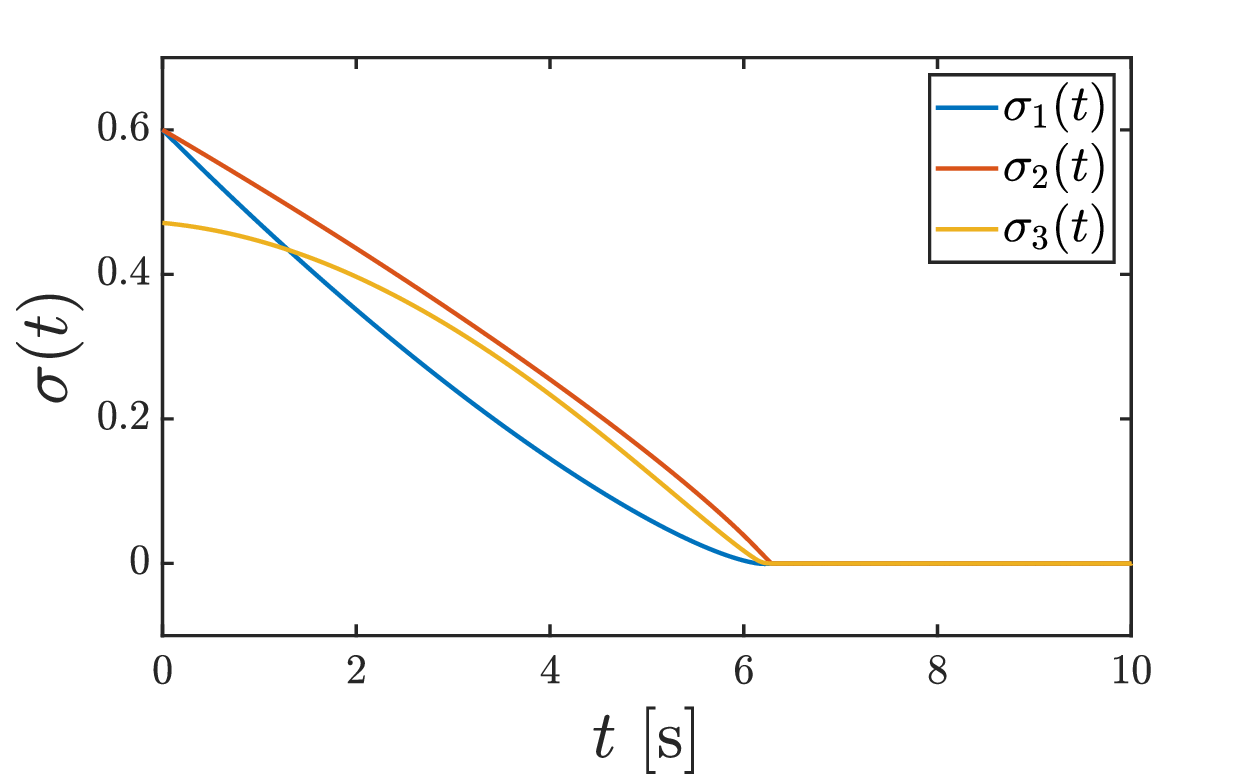}
    \caption{State $\sigma(t)$.}
    \end{subfigure}
    \begin{subfigure}{\columnwidth}
    \centering
    \includegraphics[width=\linewidth]{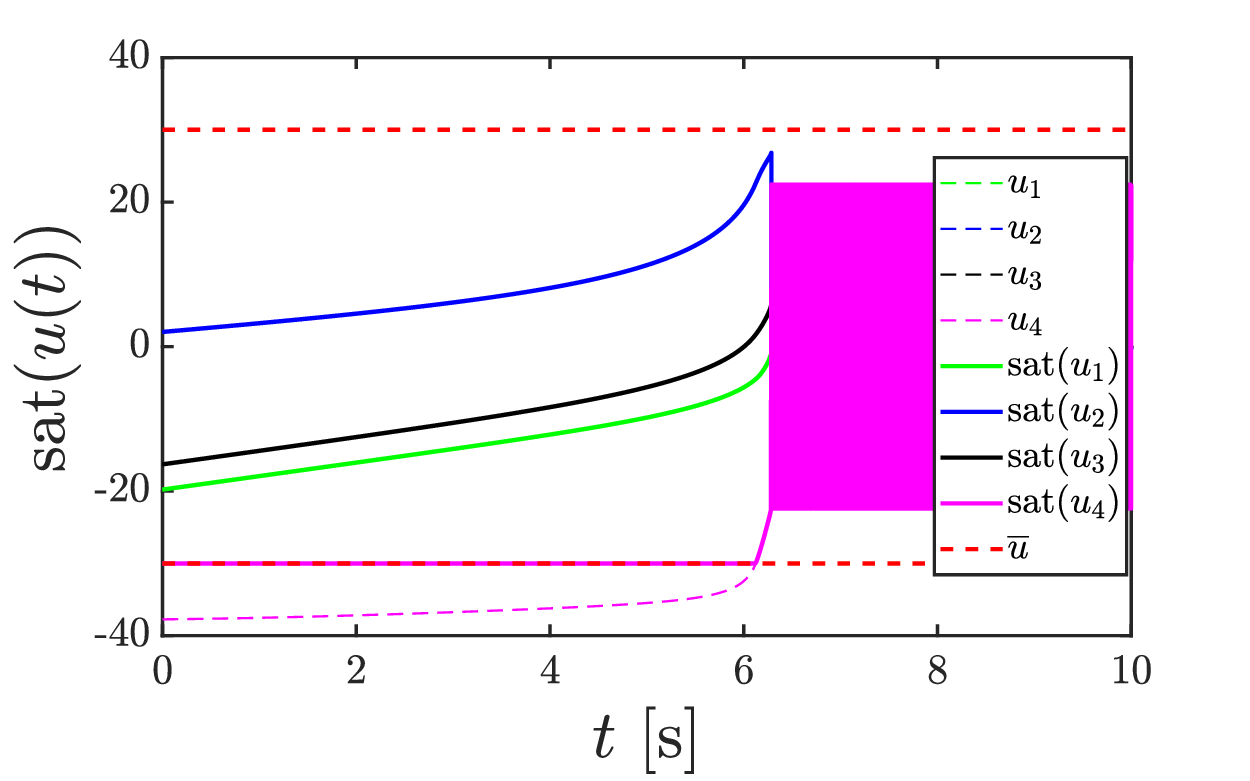}
    \caption{Control input $u(t)$ and the saturated input $\mathrm{sat}(u(t))$.}
    \end{subfigure}
    \begin{subfigure}{\columnwidth}
    \centering
    \includegraphics[width=\linewidth]{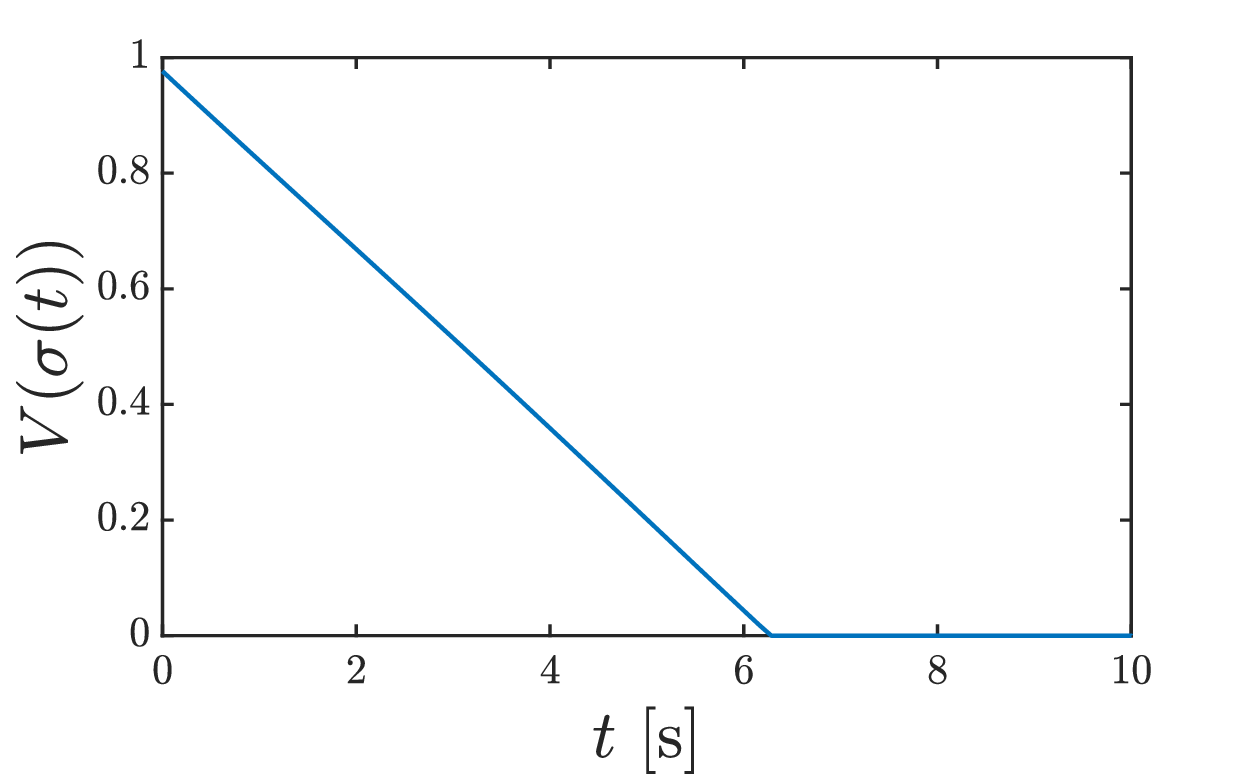}
    \caption{Lyapunov function $V(\sigma(t))$.}
    \end{subfigure}
    \caption{Closed-loop trajectory for the initial condition $\sigma(0) = [0.0587 \; -0.7976]^\top$ in $\Omega$.}
    \label{fig:state2}
\end{figure}

\section{Conclusion}
\label{sec:conclusion}

This paper has addressed the UVC design explicitly accounting for the presence
of saturating actuators. One of the main findings of this paper was a constructive
LMI-based condition to design the UVC gain for given saturation bounds. Then, 
we have provided a convex optimization problem to obtain an enlarged region of attraction estimation for which the convergence is satisfied for a 
pre-specified reaching time bound. Thus, we ensured that trajectories
initiating in this domain converge with a guaranteed reaching time, even
in the presence of saturating actuators. Interestingly, the proposed approach does not avoid
saturation by constraining the control gain norm under the saturation bound, which could be a way to deal with this issue using UVC laws. Future research will
investigate the extension of variable structure control design~\cite{coutinho2024systematic}, considering saturating actuators. Other possibilities lie in the design and analysis of different control problems with saturating actuators, as considered in the following references \cite{paper1,paper2,paper3,paper4,paper5,paper6,paper7,paper8,paper9,paper10,paper11,paper12,paper13,paper14,paper15,paper16,paper17,paper18,paper19,paper20}.


\bibliographystyle{IEEEtran}
\bibliography{references}


\end{document}